\documentclass[11pt]{amsart}

\usepackage{amsmath, amssymb, amsthm}
\usepackage[margin=1in]{geometry}

\newtheorem{theorem}{Theorem}
\newtheorem{lemma}[theorem]{Lemma}
\newcommand{\eps}{\varepsilon}

\title{Sign changes of the Liouville function in arithmetic progressions}

\author{Kevin Ford}
\author{Maksym Radziwi\l\l}

\dedicatory{To Roger Heath-Brown on the occasion of his 75th birthday}
\begin{document}


\maketitle

\section{Introduction}
 
Dirichlet established that for any reduced residue class $(a,q) = 1$ there exists a prime $p$ with $p \equiv a \pmod{q}$. It is conjectured, based on probabilistic heuristics, that the least such prime $p$ satisfies $p \ll_\varepsilon q^{1 + \varepsilon}$ for any $\varepsilon > 0$;
see \cite{LPS} for a more precise conjecture.
 Towards this conjecture,
Chowla~\cite{Chowla} showed in 1934 that the generalized Riemann
hypothesis implies $p \ll_\varepsilon q^{2+\varepsilon}$ for any $\varepsilon > 0$. This corresponds to the natural
square root barrier for equidistribution problems in arithmetic
progressions. A decade later, Linnik~\cite{Linnik1, Linnik2} proved
unconditionally that $p \ll q^{L}$ for some absolute constant~$L$.
Considerable effort has since gone into reducing the admissible value
of~$L$. A major advance was made by Heath-Brown~\cite{HeathBrown}, who
proved $L = 5.5$. Building on Heath-Brown's work,
Xylouris~\cite{Xylouris} refined the argument to show that $L = 5$
is admissible. This remains the best unconditional result to date.

In this short note we consider a multiplicative function analogue. For a prime $q$ and $(a,q) = 1$, we ask: how large must $N$ be to guarantee the existence of integers $n, m \leq N$ with $n \equiv m \equiv a \pmod{q}$ and $\lambda(n) = -1$, $\lambda(m) = +1$, where $\lambda$ denotes the Liouville function?
 
\begin{theorem}\label{thm:main}
    Let $\varepsilon > 0$. Then, for any prime $q$, sufficiently large with respect to $1 / \varepsilon$ and any residue class $(a,q) = 1$, there exist integers $n,m \leq q^{5/2 + \varepsilon}$ with $n \equiv m \equiv a \pmod{q}$ such that $\lambda(n) = -1$ and $\lambda(m) = 1$.  
\end{theorem}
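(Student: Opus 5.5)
The plan is to argue by contradiction. Suppose that every $n\le N:=q^{5/2+\eps}$ with $n\equiv a \pmod q$ has the same sign $\lambda(n)=s\in\{\pm1\}$. I would then evaluate a weighted count of products lying in the class $a$ in two ways. Since $\lambda$ is completely multiplicative and $\lambda(j^2)=1$, the natural object is
\[
T:=\sum_{\substack{n_1,n_2\le y,\ j\le J\\ n_1n_2j^2\equiv a \ (\mathrm{mod}\ q)}} \lambda(n_1)\lambda(n_2),
\]
with $y=q^{1+\eps/4}$ and $J=q^{1/4+\eps/4}$. Every product counted satisfies $n_1n_2j^2\le N$ and has $\lambda(n_1n_2j^2)=\lambda(n_1)\lambda(n_2)$. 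The hypothesis therefore forces $T=s\cdot\#\{(n_1,n_2,j): n_1n_2j^2\equiv a\}$. A routine count shows this is $s\,(1+o(1))\,y^2J/q$ in absolute value; this uses $(j,q)=1$, that $q\nmid n_i$, and $J<q$. The exponent $5/2=1+1+2\cdot\tfrac14$ is exactly what this shape produces.

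On the other side, I expand by orthogonality of characters mod $q$:
\[
T=\frac{1}{q-1}\sum_{\chi}\bar\chi(a)\,L_y(\chi)^2\,S_J(\chi^2),\qquad L_y(\chi)=\sum_{n\le y}\lambda(n)\chi(n),\quad S_J(\psi)=\sum_{j\le J}\psi(j).
\]
I split the characters into three groups.

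\begin{enumerate}
\item The principal character $\chi_0$. Its contribution is at most $J\,|L_y(\chi_0)|^2/q$. By the prime number theorem, $\sum_{n\le y}\lambda(n)=o(y)$, so this term is $o(y^2J/q)$.
\item Characters with $\chi^2$ nonprincipal. Burgess' bound with $J=q^{1/4+\eps/4}$ gives $|S_J(\chi^2)|\ll J q^{-\delta}$ for some $\delta=\delta(\eps)>0$. The multiplicative large sieve gives $\sum_\chi |L_y(\chi)|^2\le (q+y)y\ll y^2$ since $y\ge q$. Together these bound this group by $\ll q^{-1}\cdot Jq^{-\delta}\cdot y^2$, which is negligible against the main term.
\item The quadratic character $\chi_q=(\cdot/q)$, for which $\chi_q^2$ is principal. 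Its contribution is $\frac{\chi_q(a)}{q-1}\,L_y(\chi_q)^2\,(J+O(1))$.
\end{enumerate}

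Comparing the two evaluations, the contradiction reduces to showing $|L_y(\chi_q)|\le (1-c)\,y$ for some fixed $c>0$. Without this, the quadratic term could equal the main term and cancel it.

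I expect the third group to be the main obstacle, and it is not an artifact of the method. If $\lambda(p)=\chi_q(p)$ for essentially all primes $p$, then $\lambda(n)\chi_q(a)$ would be constant on the class $a$. This is the Siegel-zero-type conspiracy that the statement must rule out. Since $J$ is not fixed, I can only beat the main term if $L_y(\chi_q)^2$ is at most $(1-c)y^2$, where $y^2J/q$ is the size of the main term.

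My first attempt at this step would use the convolution $f=\lambda\chi_q*1$. This function is nonnegative on prime powers: $f(p^k)\in\{0,1\}$ for $\chi_q(p)=1$ and for $p=q$, and $f(p^k)=1$ or $k+1$ otherwise. It also equals $1$ on squares. If $\lambda\chi_q(p)=1$ for almost all primes $p\le y$, then $f(n)=\tau(n)$ on most $n$ coprime to $q$. Comparing $\sum_{n\le y} f(n)$ with the hyperbola-method evaluation $\sum_{d\le\sqrt y}\lambda\chi_q(d)\,(y/d)+\dots$ should contradict the known asymptotics. An alternative is to find directly a positive proportion of primes $p\le y$ with $\chi_q(p)=1$, for example via Burgess/Vinogradov on $\sum_{p}\chi_q(p)$ over $p\le y=q^{1+\eps/4}$. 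For such $p$ we have $\lambda(p)\chi_q(p)=-1$, which gives $|L_y(\chi_q)|\le (1-c)y$ through a Hal\'asz-type argument: Hal\'asz bounds a multiplicative function's mean away from $1$ once it differs from $1$ on a positive-density set of primes. The remaining bookkeeping is routine: adjusting $\eps$ to absorb $q^{o(1)}$ losses, excluding multiples of $q$, and using the condition that $q$ is large in terms of $1/\eps$.
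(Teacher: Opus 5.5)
Your character-sum reduction is correct. Under the hypothesis, the identity for $T$ (Burgess for $\chi^2\neq\chi_0$, the large sieve, the PNT for $\chi_0$) does force $\chi_q(a)=s$ and $|L_y(\chi_q)|=(1-o(1))y$, and you correctly identify $\lambda\approx\chi_q$ as the obstruction.

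The gap is the final step. A bound $|L_y(\chi_q)|\le(1-c)y$ with a fixed $c>0$ is not available by either route you propose.

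\emph{Route (b).} Burgess controls sums of $\chi_q$ over intervals of integers, and the Vinogradov--Karatsuba estimates concern shifted primes $\chi(p+a)$ with $a\neq0$. Nothing nontrivial is known for $\sum_{p\le y}\chi_q(p)$ with $y=q^{O(1)}$. A positive proportion of primes $p\le y$ with $\chi_q(p)=1$ would in fact give $L(1,\chi_q)\gg 1/\log q$. Indeed, $1*\chi_q\ge0$ and equals $2$ at such primes, so their number up to $x\ge q$ is at most $\tfrac12\sum_{n\le x}(1*\chi_q)(n)=\tfrac12L(1,\chi_q)x+O(xq^{-1/5})$. That lower bound for $L(1,\chi_q)$ is the exceptional-zero problem. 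Hal\'asz needs this input and cannot supply it.

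\emph{Route (a).} This is circular. The hyperbola method only rewrites $\sum_{n\le y}(\lambda\chi_q*1)(n)$ in terms of partial sums of $\lambda\chi_q$ itself. Its generating series $\zeta(s)\zeta(2s)(1-q^{-2s})/L(s,\chi_q)$ cannot be evaluated without knowing the zeros of $L(s,\chi_q)$ near $s=1$. So there are no unconditional asymptotics to contradict.

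Siegel's theorem cannot rescue the reduction as it stands either. The only input killing the principal character is $\sum_{n\le y}\lambda(n)=o(y)$. This saves a factor $\exp(-c(\log q)^{3/5-o(1)})$ but no power of $q$. You therefore learn only that $\lambda=\pm\chi_q$ off a set of that relative density. Positivity of $1*\chi_q$ then bounds $L(1,\chi_q)$ by something of the same non-power quality, which is compatible with $L(1,\chi_q)\gg_\eps q^{-\eps}$.

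The missing idea is the exact rigidity in the paper. Lemma~\ref{lem:comb}, a pigeonhole argument in $(\mathbb{Z}/q\mathbb{Z})^{\times}$, upgrades the hypothesis to two exact statements (Lemma~\ref{lem:periodicity}):
\begin{enumerate}
\item every interval $[M,M+q)\subset[1,N/q]$ contains exactly $\varphi(q)/2$ integers coprime to $q$ with $\lambda=+1$ and $\varphi(q)/2$ with $\lambda=-1$;
\item $\lambda(n+q)=\lambda(n)$ holds exactly.
\end{enumerate}
Hence the principal character contributes exactly zero rather than $o(1)$. This is what produces a power saving: $\lambda=\kappa'\chi_q$ off $O(q^{1-\eps/4})$ integers $n\le q$. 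That saving is what allows Siegel's theorem, together with $1*\chi_q\ge0$ and Lemma~\ref{le:hyperbola}, to give a contradiction when $\kappa'=1$.

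Your identity could be repaired in this way. Exact balance gives $|\sum_{n\le y,(n,q)=1}\lambda(n)|\le q$, and hence $\#\{n\le y:(n,q)=1\}-|L_y(\chi_q)|\ll yq^{-\delta}$. Periodicity transfers this to $n\le q$, and the paper's endgame then applies, with Burgess replacing the Poisson/large-sieve step of Lemma~\ref{lem:dominant}. Without that exact input and Siegel's theorem, however, the proposal does not close.
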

 
The present result was obtained by the authors in 2018, and a complete proof was shared with Matom\"aki  in July 2023, at the conference honoring Heath-Brown's 70th birthday, in response to her work with Ter\"av\"ainen \cite{MT_products} establishing the existence of products of three primes $p_1 p_2 p_3 \ll q^3$ in every progression $a \pmod{q}$ with $(a,q) = 1$. In upcoming work \cite{MT}, using methods different from ours, Matom\"aki and Ter\"av\"ainen have obtained the stronger bound $q^{2+\varepsilon}$ for general real-valued multiplicative functions unless the sign of the function strongly pretends to be a real Dirichlet character modulo $q$.  For the Liouville function, their result gives $q^{2+\varepsilon}$ unconditionally, matching what was previously known only under the assumption of the Generalized Riemann Hypothesis. 
 
Our argument is short and proceeds along different lines from \cite{MT}. We show that if $\lambda(n)$ takes a constant sign for all $n \equiv a \pmod{q}$ with $n \leq Xq^2$ then $\lambda(n)$ is $q$-periodic for all $n \leq Xq$ with $(n, q) = 1$ and the sum $\lambda(1) + \lambda(2) + \ldots + \lambda(q - 1)$ is exactly zero. We then show, using the large sieve and Poisson summation, that for $X > q^{1/2 + 3\varepsilon}$ this implies that $\lambda(n) = \kappa' \chi_q(n)$ with $\kappa' \in \{-1,1\}$ and $\chi_q$ the Legendre symbol with modulus $q$, for all $n \leq q$ with at most $\ll q^{1 - \varepsilon / 4}$ exceptions. We rule out the case $\kappa' = -1$ using mere multiplicativity. We then rule out the remaining case $\kappa' = 1$ using Siegel's lower bound for $L(1, \chi)$. 

We remark that, unlike Linnik's theorem (where both the exponent and the implied constant 
are effectively computable), our argument
uses Siegel's theorem and therefore the threshold ``sufficiently large in terms of $1 / \varepsilon$" appearing in Theorem~\ref{thm:main} is uncomputable.

\section{Notation \& Acknowledgments}

Throughout $\chi_q(\cdot)$ denotes the Legendre symbol modulo $q$. For $x \in \mathbb{R}$ we define $e(x) := e^{2\pi i x}$. Given a smooth function $f$ we define its Fourier transform as
$$
\widehat{f}(x) := \int_{\mathbb{R}} f(\xi) e(-x \xi) d \xi.
$$
$W$ always denotes a smooth function compactly supported in $[1/2,1]$ with $0 
\leq W \leq 1$ and $\int_{\mathbb{R}} W(x) dx > 0$.
The first author is supported by NSF grants DMS-1802139 and DMS-2301264, a Simons
Collaboration grant and a Simons Fellowship.
The second author acknowledges support of NSF grant DMS-2401106.

\section{Auxiliary lemmas}

\begin{lemma}[Poisson summation for characters {\cite[Exercise~5 \& Lemma~3.1]{IK}}]\label{lem:poisson}
Let $q$ be a positive integer and let $\chi$ be a primitive character modulo $q$.
Let $W \colon \mathbb{R} \to \mathbb{C}$ be a smooth function with compact support.
Then, for any $N > 0$,
\[
\sum_{n \in \mathbb{Z}} \chi(n)\, W\!\biggl(\frac{n}{N}\biggr)
\;=\;
\frac{N \tau(\chi)}{q}
\sum_{m \in \mathbb{Z}} \overline{\chi}(m)\, \widehat{W}\!\biggl(\frac{mN}{q}\biggr),
\]
where $\tau(\chi) = \sum_{a=1}^{q} \chi(a)\, e(a/q)$ is the Gauss sum.
In particular, $|\tau(\chi)| = \sqrt{q}$ for $\chi$ primitive.
\end{lemma}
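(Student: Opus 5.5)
The statement to prove is Lemma~\ref{lem:poisson}, which is classical. The plan is to split $n$ into residue classes modulo $q$, apply ordinary Poisson summation in each class, and then recombine the classes using the Gauss sum identity for primitive characters.

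First, write $n = a + qk$ with $a$ running over $\mathbb{Z}/q\mathbb{Z}$ and $k \in \mathbb{Z}$. Then
\[
\sum_{n} \chi(n) W(n/N) = \sum_{a \bmod q} \chi(a) \sum_{k\in\mathbb{Z}} W\!\Bigl(\frac{a+qk}{N}\Bigr).
\]
Since $W$ is smooth and compactly supported, the function $g(t) = W((a+qt)/N)$ is Schwartz, so the Poisson formula $\sum_k g(k) = \sum_m \widehat g(m)$ applies. The change of variables $\xi = (a+qt)/N$ gives
\[
\widehat g(m) = \frac{N}{q}\, e\!\Bigl(\frac{am}{q}\Bigr)\widehat W\!\Bigl(\frac{mN}{q}\Bigr).
\]
Substituting, the left side becomes
\[
\frac{N}{q}\sum_m \widehat W\!\Bigl(\frac{mN}{q}\Bigr) \sum_{a \bmod q}\chi(a)\, e\!\Bigl(\frac{am}{q}\Bigr).
\]
The interchange of the $a$ and $m$ sums is justified because $\widehat W$ decays rapidly.

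Second, evaluate the inner sum $\tau(\chi,m)=\sum_a \chi(a)e(am/q)$. There are two cases.
\begin{itemize}
\item If $(m,q)=1$, substitute $a \mapsto a\overline{m}$. This gives $\tau(\chi,m)=\overline\chi(m)\tau(\chi)$.
\item If $(m,q)=d>1$, primitivity is needed. Write $q=dq_1$. Because $\chi$ is primitive, it is not induced from modulus $q_1$, so there is some $c \equiv 1 \pmod{q_1}$ with $(c,q)=1$ and $\chi(c)\neq 1$. Replacing $a$ by $ca$ leaves $e(am/q)$ unchanged, since $am/q = a(m/d)/q_1$ depends only on $a \bmod q_1$. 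Hence $\tau(\chi,m)=\chi(c)\,\tau(\chi,m)$, which forces $\tau(\chi,m)=0$. This agrees with $\overline\chi(m)\tau(\chi)=0$.
\end{itemize}
Inserting $\tau(\chi,m)=\overline\chi(m)\tau(\chi)$ for all $m$ yields the stated formula.

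Finally, for $|\tau(\chi)|=\sqrt q$, compute $\sum_{m \bmod q}|\tau(\chi,m)|^2$ in two ways.
\begin{itemize}
\item Expanding the square and using orthogonality of additive characters gives $q\sum_a|\chi(a)|^2=q\varphi(q)$.
\item By the identity just proved, the same sum equals $\varphi(q)|\tau(\chi)|^2$.
\end{itemize}
Comparing the two gives $|\tau(\chi)|^2=q$. The only step needing care is the vanishing of $\tau(\chi,m)$ for $(m,q)>1$, which is exactly where primitivity enters. For prime $q$, the case relevant to the paper, this step is trivial: there $(m,q)>1$ means $q\mid m$, and $\sum_a\chi(a)=0$.
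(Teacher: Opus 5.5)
Your proof is correct. The paper gives no proof of this lemma and only cites Iwaniec--Kowalski, and your argument is the standard one behind that citation: split $n$ into classes modulo $q$, apply Poisson summation in each class, use $\sum_{a \bmod q}\chi(a)e(am/q)=\overline{\chi}(m)\tau(\chi)$ with primitivity forcing vanishing when $(m,q)>1$, and obtain $|\tau(\chi)|=\sqrt{q}$ by evaluating $\sum_{m}|\tau(\chi,m)|^2$ in two ways.
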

 
\begin{lemma}[Siegel's theorem {\cite[Theorem~11.14]{MV}}]\label{lem:siegel}
For every $\eps > 0$ and every real primitive character $\chi$ modulo $q$, we have
\[
L(1,\chi) \gg_\eps q^{-\eps}.
\]
\end{lemma}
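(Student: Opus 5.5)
We would prove Lemma~\ref{lem:siegel} by the classical argument of Estermann and Goldfeld. It only needs the Dirichlet series of a product of four $L$-functions and elementary contour integration. Fix $\eps>0$ and set $\delta=\eps/10$ (any small multiple of $\eps$ works).

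\emph{Auxiliary character.} If some real primitive character $\chi_1$ (modulo $q_1$) has a real zero $\beta_1\in(1-\delta,1)$, fix one such pair $(\chi_1,\beta_1)$. Otherwise let $\chi_1$ be any fixed real primitive character and put $\beta_1=1-\delta$. This choice is where ineffectivity enters: $\chi_1$ depends only on $\eps$, but we cannot say which character it is.

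\emph{The product function.} Given a real primitive $\chi$ modulo $q$ with $\chi\neq\chi_1$, consider
\[
F(s)=\zeta(s)L(s,\chi)L(s,\chi_1)L(s,\chi\chi_1)=\sum_{n\ge1}a_n n^{-s}.
\]
Since $\log F(s)$ is a Dirichlet series with nonnegative coefficients, we have $a_n\ge0$ and $a_1=1$. Moreover $F$ is meromorphic with only a simple pole at $s=1$, whose residue is
\[
\lambda=L(1,\chi)L(1,\chi_1)L(1,\chi\chi_1).
\]

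\emph{Key inequality.} We will show that for $\beta\in(1-\delta,1)$,
\[
F(\beta)\ \ge\ \tfrac12-\frac{C\lambda}{1-\beta}\,(qq_1)^{c(1-\beta)},
\]
with absolute constants $C,c$. To do this, take $x\ge2$ and use positivity:
\[
e^{-1}\le\sum_n a_n n^{-\beta}e^{-n/x}=\frac1{2\pi i}\int_{(2)}F(s+\beta)\Gamma(s)x^s\,ds.
\]
Shift the contour to $\operatorname{Re}s=\tfrac12-\beta$, picking up residues at $s=0$ (which gives $F(\beta)$) and at $s=1-\beta$ (which gives $\lambda\Gamma(1-\beta)x^{1-\beta}$). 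The remaining integral is bounded by convexity bounds for the four $L$-functions on $\operatorname{Re}=\tfrac12$ together with the decay of $\Gamma$; it is $\ll (qq_1)x^{-1/2}$. Choosing $x=(qq_1)^{c}$ with $c$ large makes this error $\le e^{-1}/2$. We then drop $F(\beta)$ in the right direction and note $\Gamma(1-\beta)\ll 1/(1-\beta)$, which gives the inequality.

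\emph{Choice of $\beta$.} Take $\beta=\beta_1$.
\begin{itemize}
\item In the first case $F(\beta_1)=0$, since $L(\beta_1,\chi_1)=0$.
\item In the second case, no real primitive character has a zero in $(1-\delta,1)$, so $L(\beta,\cdot)>0$ there for each of $\chi,\chi_1,\chi\chi_1$. (These $L$-values are positive at $1$, and $\chi\chi_1$ is induced by a primitive real character, which changes the $L$-function only by finitely many Euler factors positive at real $s>0$.) Since $\zeta(\beta)<0$, we again get $F(\beta)\le0$.
\end{itemize}
In either case $\lambda\gg(1-\beta_1)(qq_1)^{-c(1-\beta_1)}$. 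Combining this with the elementary upper bounds $L(1,\chi_1)\ll\log q_1$ and $L(1,\chi\chi_1)\ll\log(qq_1)$, and using that $q_1,\beta_1$ depend only on $\eps$, we obtain
\[
L(1,\chi)\gg_\eps q^{-c\delta}/\log q\gg_\eps q^{-\eps}.
\]
The single excluded case $\chi=\chi_1$ is absorbed into the constant.

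\emph{Main obstacle.} The delicate step is the contour shift. We need a uniform polynomial bound in $qq_1$ for $F$ on the shifted line, and we must keep the residue term's dependence on $(1-\beta)$ explicit so that the final exponent is $c(1-\beta_1)\le c\delta$ rather than something growing with $\chi$. This is routine but needs care with the convexity estimates and with the imprimitivity of $\chi\chi_1$.
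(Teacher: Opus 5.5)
Your proposal is correct. It is the standard Estermann--Goldfeld argument: positivity of the coefficients of $\zeta(s)L(s,\chi)L(s,\chi_1)L(s,\chi\chi_1)$, a lower bound for $F(\beta)$ in terms of the residue $\lambda$, and the dichotomy on whether some real primitive character has a zero in $(1-\delta,1)$. That is essentially the proof behind the Montgomery--Vaughan theorem the paper cites, since the paper itself gives no proof of this lemma. The only blemishes are bookkeeping: on $\operatorname{Re} s=\tfrac12-\beta$ one has $|x^s|=x^{1/2-\beta}$, so the shifted integral is $\ll (qq_1)x^{-1/2+(1-\beta)}\le (qq_1)x^{-1/4}$ rather than $(qq_1)x^{-1/2}$, and with your normalisation the constant $\tfrac12$ in the key inequality should be $e^{-1}/2$. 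Neither affects the argument, because $c$ stays absolute and you then take $\delta$ small relative to $1/c$.
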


\begin{lemma}[Large sieve inequality {\cite[Section~7.5]{IK}}]\label{lem:largesieve}
Let $q \in \mathbb{N}$ and $N \geq 1$. For any complex numbers $(a_n)_{n \leq N}$,
\[
\sum_{\chi \pmod{q}} \biggl| \sum_{n \leq N} a_n \chi(n) \biggr|^2
\leq (N + q) \sum_{\substack{n \leq N \\ (n,q) = 1}} |a_n|^2.
\]
\end{lemma}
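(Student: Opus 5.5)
The inequality follows directly from orthogonality of characters followed by Cauchy--Schwarz; no analytic large sieve over Farey fractions is needed, since we sum over all characters modulo $q$ rather than only primitive ones.

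\emph{Step 1: expand and use orthogonality.} Expanding the square gives
\[
\sum_{\chi \pmod q}\biggl|\sum_{n\le N}a_n\chi(n)\biggr|^2=\sum_{n,m\le N}a_n\overline{a_m}\sum_{\chi\pmod q}\chi(n)\overline{\chi}(m).
\]
The inner sum equals $\varphi(q)$ when $(nm,q)=1$ and $n\equiv m \pmod q$, and vanishes otherwise. Grouping $n$ and $m$ by their common reduced residue class $b$, the left-hand side becomes
\[
\varphi(q)\sum_{\substack{b \bmod q\\ (b,q)=1}}\biggl|\sum_{\substack{n\le N\\ n\equiv b \ (\mathrm{mod}\ q)}}a_n\biggr|^2.
\]

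\emph{Step 2: Cauchy--Schwarz in each class.} For fixed $b$, the number of $n\le N$ with $n\equiv b \pmod q$ is at most $N/q+1$. By Cauchy--Schwarz,
\[
\biggl|\sum_{\substack{n\le N\\ n\equiv b \ (\mathrm{mod}\ q)}}a_n\biggr|^2\le\Bigl(\frac Nq+1\Bigr)\sum_{\substack{n\le N\\ n\equiv b \ (\mathrm{mod}\ q)}}|a_n|^2.
\]

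\emph{Step 3: sum over $b$.} Summing over the reduced classes $b$ yields a total bound of
\[
\varphi(q)\Bigl(\frac Nq+1\Bigr)\sum_{\substack{n\le N\\(n,q)=1}}|a_n|^2.
\]
Since $\varphi(q)\le q$, we have $\varphi(q)(N/q+1)\le N+q$, which gives the stated inequality.

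There is no real obstacle here. The only point requiring care is the bookkeeping in Step 1: the orthogonality relation kills every pair $(n,m)$ in which $n$ or $m$ shares a factor with $q$. This is exactly why the right-hand side is restricted to $(n,q)=1$.
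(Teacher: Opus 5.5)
Your proof is correct and complete. The route differs from the paper only in that the paper gives no proof at all: it quotes the inequality from Iwaniec--Kowalski. The general large sieve there is built to handle sums over many moduli at once, and that is where the analytic inequality for well-spaced points is needed.

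You instead prove the single-modulus statement directly. Orthogonality collapses the left side to $\varphi(q)\sum_{(b,q)=1}\bigl|\sum_{n\equiv b\ (\mathrm{mod}\ q)}a_n\bigr|^2$, and Cauchy--Schwarz in each residue class finishes the job. What your argument buys:
\begin{itemize}
\item It is self-contained.
\item It gives the slightly sharper factor $\varphi(q)(N/q+1)$.
\item It only uses that each residue class meets the support of $(a_n)$ in at most $N/q+1$ points. So it applies verbatim to coefficients supported on any block of $N$ consecutive integers. That is convenient for the paper's application, where the character sums run over $|m|\le q^{1+\sigma}/N$, including negative $m$.
\end{itemize}
The citation buys only brevity, since the paper only ever uses a single prime modulus.

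One small correction to your framing: the elementary argument suffices because there is a single modulus, not because you sum over all characters rather than primitive ones. For one modulus, the sum over primitive characters is bounded by the full sum by positivity, so both versions are equally easy.
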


\begin{lemma} \label{le:hyperbola}
  Let $W : [0, \infty) \to [0,1]$ be smooth and compactly supported in $[1/2, 1]$,
  and let $\chi$ be a primitive character of modulus $q > 1$. Then, for $N \geq q$, 
  $$
    \sum_{n \geq 1} \Big( \sum_{d \mid n} \chi(d) \Big) W\!\Big( \frac{n}{N} \Big)
    = L(1, \chi) N \widehat{W}(0) + O(q^{1/2}).
  $$
\end{lemma}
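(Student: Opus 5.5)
We will prove Lemma~\ref{le:hyperbola} by writing $\sum_{d\mid n}\chi(d)=\sum_{dm=n}\chi(d)$ and applying a smoothed Dirichlet hyperbola method, in which the two ranges of the divisor sum are treated differently. Fix a parameter $D$, to be chosen of size about $\sqrt{N}$ or adapted to $q$. We split the sum over pairs $(d,m)$ with $dm=n$ into two parts: the pairs with $d\le D$, and the pairs with $d>D$. In the second part we have $m<N/D$.

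\textbf{The range $d \le D$.} Here we sum over $m$ first. For each fixed $d$, we have $\sum_m W(dm/N)=\frac{N}{d}\widehat W(0)+O(N^{-A})$ by ordinary Poisson summation, since $W$ is smooth and $N/d\ge N/D$ is large. This range therefore contributes $N\widehat W(0)\sum_{d\le D}\chi(d)/d$ plus a negligible error. Using the Pólya--Vinogradov bound $\sum_{d\le t}\chi(d)\ll q^{1/2}\log q$ and partial summation, the tail satisfies $\sum_{d>D}\chi(d)/d\ll q^{1/2}\log q/D$. The main term is thus $L(1,\chi)N\widehat W(0)+O(Nq^{1/2}\log q/D)$.

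\textbf{The range $d > D$, $m < N/D$.} Now we fix $m$ and sum over $d$: the inner sum is $\sum_{d>D}\chi(d)W(dm/N)$. To avoid the sharp cutoff $d>D$, it is cleaner from the start to use a smooth partition of unity $1=V(d/D)+(1-V(d/D))$ instead of the sharp split. After this change, the second range involves $\sum_d\chi(d)W(dm/N)(1-V(d/D))$. This is a smooth character sum of length $N/m$, and Lemma~\ref{lem:poisson} bounds it by $\ll q^{1/2}$. The zero-frequency term vanishes because $\chi$ is primitive with $q>1$, so $\overline\chi(0)=0$. The nonzero frequencies contribute $\frac{(N/m)\sqrt q}{q}\sum_{k\ne0}|\widehat\Phi(k(N/m)/q)|$, where $\Phi$ denotes the smooth weight.

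\textbf{Main obstacle: the total error.} Summing the second range trivially over $m<N/D$ gives $O((N/D)q^{1/2})$, and the first-range tail is $O(Nq^{1/2}/D)$. Neither is $O(q^{1/2})$ unless $D\asymp N$. The natural resolution is to take $D$ of size $N$ itself, up to the support of $W$. Concretely, set $V(d/D)$ with $D=N$, choosing $V$ so that the second range only involves $m=O(1)$. Then:
\begin{itemize}
\item[(i)] The second range is $O(1)$ smooth character sums of length $\asymp N$, each $O(q^{1/2})$ by Lemma~\ref{lem:poisson}. This holds because the dual sum has length $q/N\le1$ when $N\ge q$, so all nonzero frequencies decay rapidly and the total is $\ll\sqrt q\,(N/q)(q/N)^{A}\ll\sqrt q$.
\item[(ii)] The first range becomes $\sum_d\chi(d)V(d/N)\sum_m W(dm/N)$. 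The inner $m$-sum is no longer long, so it is handled by applying Poisson in $m$ exactly: $\sum_m W(dm/N)=\frac Nd\sum_k\widehat W(kN/d)$.
\end{itemize}
The $k=0$ term gives $N\widehat W(0)\sum_d\chi(d)V(d/N)/d$. This equals $N\widehat W(0)\bigl(L(1,\chi)-\sum_d \chi(d)(1-V(d/N))/d\bigr)$, and the subtracted tail is a smooth character sum at scale $\ge N$ weighted by $1/d$. By Lemma~\ref{lem:poisson} applied dyadically, that tail is $\ll q^{1/2}/N$, which contributes $O(q^{1/2})$. The $k\neq0$ terms come from $d\le N$ and are killed by the decay of $\widehat W(kN/d)$ for $d\le\eta N$. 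The remaining window $d\asymp N$ again gives character sums of length $\asymp N$, each $O(q^{1/2})$.

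The delicate step I expect is (ii): making the partition so that both sides are genuinely smooth character sums at scale $\asymp N\ge q$, where Lemma~\ref{lem:poisson} gives $O(q^{1/2})$ without logarithmic loss.
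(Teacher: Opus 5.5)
Your final scheme is a legitimate alternative to the paper's proof. You put a smooth cut $V(d/N)$ at scale $N$, apply Lemma~\ref{lem:poisson} in $d$ on the piece $d\gg N$ (where only $m=O(1)$ occurs), and use classical Poisson in $m$ on the piece $d\ll N$. Your treatment of the second range is correct, and so is the $k=0$ term, giving the main term $L(1,\chi)N\widehat W(0)$ plus a tail $\ll q^{1/2}/N$.

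The gap is in the $k\neq 0$ terms of (ii). They are not ``killed by the decay of $\widehat W(kN/d)$ for $d\le\eta N$'' when $\eta$ is fixed. Decay alone gives only
\[
\sum_{d\le \eta N}\frac{N}{d}\sum_{k\neq 0}\Bigl|\widehat W\Bigl(\frac{kN}{d}\Bigr)\Bigr| \ll_A \sum_{d\le \eta N}\Bigl(\frac{d}{N}\Bigr)^{A-1}\ll_A \eta^{A}N .
\]
For fixed $\eta$ this is of order $N$, because the terms with $d\asymp \eta N$ have $|kN/d|\asymp |k|/\eta$ bounded. Since $N$ may be arbitrarily large compared with $q$, this is not $O(q^{1/2})$. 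To make it $O(q^{1/2})$ by decay alone you would need $\eta\le (q^{1/2}/N)^{1/A}$. But then the ``remaining window'' is no longer $d\asymp N$: it consists of about $A^{-1}\log(N/q^{1/2})$ dyadic blocks, and bounding each by $O(q^{1/2})$, as you propose, loses an unbounded factor.

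The missing idea is to use the oscillation of $\chi$ over the whole range at once, instead of splitting it. For $k\neq 0$ set $F_k(x)=V(x)x^{-1}\widehat W(k/x)$ for $x>0$ and $F_k(x)=0$ for $x\le 0$. The $k$-th term is then exactly $\sum_d\chi(d)F_k(d/N)$. Because $\widehat W$ is Schwartz, $F_k$ vanishes to infinite order at $0$, so $F_k$ is smooth and compactly supported. Each derivative of $x^{-1}\widehat W(k/x)$ up to order two is $\ll_B x^{B-3}|k|^{-B}$ on the support, which gives $\|F_k''\|_{L^1}\ll |k|^{-2}$. Lemma~\ref{lem:poisson}, with no zero frequency and $N\ge q$, then yields
\[
\sum_d \chi(d)F_k\Bigl(\frac{d}{N}\Bigr)=\frac{N\tau(\chi)}{q}\sum_{l\neq 0}\overline{\chi}(l)\,\widehat{F_k}\Bigl(\frac{lN}{q}\Bigr)\ll \frac{N}{q^{1/2}}\sum_{l\neq 0}\Bigl(\frac{|l|N}{q}\Bigr)^{-2}\|F_k''\|_{L^1}\ll q^{1/2}|k|^{-2}.
\]
Summing over $k\neq 0$ gives $O(q^{1/2})$ and completes your argument.

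So repaired, your proof is a hands-on ``double Poisson'' substitute for the functional equation and needs only Poisson summation. The paper instead writes the sum as $\frac{1}{2\pi i}\int_{(2)}\zeta(s)L(s,\chi)N^{s}\widetilde W(s)\,ds$ and takes the residue at $s=1$. It then bounds the integral on $\Re s=-\delta$ by the convexity bounds from the functional equations, getting $N^{-\delta}q^{1/2+\delta}\le q^{1/2}$ in a few lines.
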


\begin{proof}
  By Mellin inversion,
  $$
    \sum_{n \geq 1} \Big( \sum_{d \mid n} \chi(d) \Big) W\!\Big( \frac{n}{N} \Big)
    = \frac{1}{2\pi i} \int_{(2)} \zeta(s) L(s, \chi) N^{s}\, \widetilde{W}(s)\, ds,
  $$
  where $\widetilde{W}(s) = \int_0^\infty W(x) x^{s-1} dx$ is entire and satisfies
  $\widetilde{W}(s) \ll_A (1+|s|)^{-A}$ in any vertical strip. We shift the contour to $\Re s = - \delta$ with $\delta = 1 / 100$. Since $\chi$ is non-principal, there is a simple pole at $s = 1$ accounting for the main term. We bound the integral on the line $\Re s = -\delta$ using the functional equation and this yields the error term $\ll N^{-\delta} q^{1/2 + \delta} \ll \sqrt{q}$ for $N \geq q$.  
\end{proof}

\begin{lemma}\label{lem:comb}
Let $A$ and $B$ be two subsets of $(\mathbb{Z}/q\mathbb{Z})^{\times}$
with $|A|+|B|>\varphi(q)$.
Then for every $(n,q)=1$ we can find $a \in A$ and $b \in B$ such that
$n \equiv ab \pmod{q}$.
\end{lemma}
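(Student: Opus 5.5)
We use the standard pigeonhole argument in the group $G=(\mathbb{Z}/q\mathbb{Z})^{\times}$. Fix a residue $n$ with $(n,q)=1$ and consider the set
$$
nB^{-1} := \{ n b^{-1} : b \in B \} \subseteq G .
$$
Since multiplication by $n$ and inversion are both bijections of $G$, we have $|nB^{-1}| = |B|$.

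Both $A$ and $nB^{-1}$ are subsets of $G$, and
$$
|A| + |nB^{-1}| = |A| + |B| > \varphi(q) = |G|.
$$
Hence they cannot be disjoint, since disjointness would force $|A|+|nB^{-1}| \le |G|$. So there is some $a \in A \cap nB^{-1}$, which means $a = n b^{-1}$ for some $b \in B$. Rearranging gives $n \equiv ab \pmod{q}$, as required.

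No step here is a real obstacle. The only points to check are that $b^{-1}$ exists, which holds because $B \subseteq G$, and that the maps $x \mapsto x^{-1}$ and $x \mapsto nx$ preserve cardinality on $G$, which they do because $(n,q)=1$ makes both bijections.
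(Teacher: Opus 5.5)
Your proof is correct and is essentially the paper's argument: both translate $B$ to $nB^{-1}$, note $|nB^{-1}|=|B|$, and use pigeonhole in $(\mathbb{Z}/q\mathbb{Z})^{\times}$ to get an element of $A\cap nB^{-1}$. Your explicit check that $x\mapsto nx^{-1}$ is a bijection of the group is a small detail the paper leaves implicit.
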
 
\begin{proof}
Since $|A| + |B| > \varphi(q)$, for any $n$ with $(n,q)=1$
the sets $A$ and $nB^{-1} := \{n b^{-1} \bmod q : b \in B\}$
satisfy $|A| + |nB^{-1}| > \varphi(q) = |(\mathbb{Z}/q\mathbb{Z})^{\times}|$,
so by the pigeonhole principle they must intersect.
Any element in $A \cap nB^{-1}$ yields the desired representation.
\end{proof}

\section{Proof of Theorem \ref{thm:main}}
 
The proof hinges on the following lemma. 
\begin{lemma}\label{lem:periodicity}
Let $\varepsilon > 0$ and let $q$ be 
sufficiently large  with respect to $1/\varepsilon$.
Let $a$ be an integer coprime to $q$,
let $\kappa \in \{-1, +1\}$, and let $N \geq q^{2 + \varepsilon}$.
If for every $n \equiv a \pmod{q}$ with $n \leq N$ we have
$\lambda(n) = \kappa$, then
\[
\lambda(n + q) = \lambda(n)
\]
for all $n \leq N/q - q$ with $(n, q) = 1$, and
$$
\# \Big \{ 1 \leq n < q : (n,q)=1, \lambda(n) = -1 \Big \} = \frac{\varphi(q)}{2} = \# \Big \{ 1 \leq n < q : (n,q)=1, \lambda(n) = 1 \Big \}.
$$
\end{lemma}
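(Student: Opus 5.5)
The plan is to get both conclusions directly from multiplicativity of $\lambda$, with no analytic input. Periodicity comes from multiplying $n$ and $n+q$ by a suitable small cofactor. The count then comes from multiplying by the prime $2$.

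\emph{Step 1 (periodicity).} Fix $n \le N/q - q$ with $(n,q)=1$. Since $n$ is invertible modulo $q$, we may choose an integer $1 \le m < q$ with
\[
m \equiv a\, n^{-1} \pmod q .
\]
Then $mn \equiv a \pmod q$. Also $m(n+q) = mn + mq \equiv a \pmod q$. For the sizes, $mn < m(n+q) < q \cdot (N/q - q + q) = N$. So both $mn$ and $m(n+q)$ lie in the residue class $a \bmod q$ and do not exceed $N$. By hypothesis,
\[
\lambda(m)\lambda(n) = \lambda(mn) = \kappa = \lambda(m(n+q)) = \lambda(m)\lambda(n+q).
\]
Here we use complete multiplicativity of $\lambda$. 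Dividing by $\lambda(m) \in \{\pm 1\}$ gives $\lambda(n+q) = \lambda(n)$.

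\emph{Step 2 (a function on residues).} Iterating Step 1 shows that $\lambda(n)$ depends only on $n \bmod q$ for all $n \le N/q$ with $(n,q)=1$. This range contains a complete set of reduced residues, since $N/q \ge q^{1+\varepsilon} > q$. We therefore get a well-defined function $f \colon (\mathbb{Z}/q\mathbb{Z})^\times \to \{\pm1\}$ with $f(n \bmod q) = \lambda(n)$ for every such $n$. Let
\[
A = f^{-1}(1), \qquad B = f^{-1}(-1).
\]
The second assertion of the lemma is exactly $|A| = |B| = \varphi(q)/2$.

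\emph{Step 3 (the count via the prime $2$).} Since $q$ is a large prime, it is odd, so multiplication by $2$ is a bijection of $(\mathbb{Z}/q\mathbb{Z})^\times$. Take any residue $x$ and its representative $1 \le n < q$. Then $2n < 2q \le N/q - q$, because $N/q \ge q^{1+\varepsilon} \ge 3q$ for $q$ large. Hence Step 2 applies to both $n$ and $2n$, and
\[
f(2x) = \lambda(2n) = \lambda(2)\lambda(n) = -f(x).
\]
So $x \mapsto 2x$ maps $A$ injectively into $B$ and $B$ injectively into $A$. This forces $|A| = |B|$, and since $|A| + |B| = \varphi(q)$, both equal $\varphi(q)/2$.

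\emph{Main obstacle.} The only delicate point is making sure every product used stays in its admissible range. In Step 1 the products must stay below $N$; in Step 3 the representatives must stay below $N/q - q$, where periodicity is known. Both reduce to the elementary inequality $N \ge q^{2+\varepsilon}$, which gives ample room once $q$ is large in terms of $1/\varepsilon$.

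A first attempt might try to show that $f$ is a character on $(\mathbb{Z}/q\mathbb{Z})^\times$, or to use Lemma~\ref{lem:comb}. That route fails, because products of two representatives below $q$ can reach $q^2$, which exceeds $N/q$. Using the single small prime $2$ avoids this issue.
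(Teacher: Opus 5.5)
Your proof is correct, and it takes a different and considerably shorter route than the paper. The paper proves balance first. Using Lemma~\ref{lem:comb} and a small prime $p\le q^{\varepsilon/3}$, it shows that every window $[M,M+q)\subset[1,q^{1+\varepsilon/3})$ is balanced, since otherwise products $b_1b_2$ and $pbb'$ of both signs would land in the class $a$ below $q^{2+\varepsilon}$. It then transfers balance to every window in $[1,N/q]$ by pairing against $J=[1,q)$. Only at the end does it extract periodicity, by sliding a window one step and comparing counts.

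You reverse the order. The multiplier $m\equiv a n^{-1}$ with $1\le m<q$ gives $\lambda(n)=\kappa\lambda(m)=\lambda(n+q)$ directly. The sign flip $f(2x)=-f(x)$ on $(\mathbb{Z}/q\mathbb{Z})^\times$ then gives the count.

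Your route needs no additive-combinatorial input. The periodicity statement holds with no lower bound on $N$ at all, and the count needs only $N\ge 2q^2$ together with $q$ odd (or, for general $q$, any prime $p\nmid q$ with $pq\le N/q$). The paper's argument as written uses $N\ge q^{2+\varepsilon}$ with $q$ large in terms of $1/\varepsilon$. The paper's intermediate statement, balance in every length-$q$ window of $[1,N/q]$, also follows at once from your periodicity plus the count on $[1,q)$. The extra room is not needed for Theorem~\ref{thm:main}, so the real gain is simplicity.

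One correction to your closing remark: Lemma~\ref{lem:comb} does not fail here. The paper does not use it to evaluate $f$ on products up to $N/q$. It uses it to manufacture elements of the class $a$ up to $N$, where products of size about $q^{2+\varepsilon}$ are admissible.
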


\begin{proof}
Set $\eta := \varepsilon/3$.
For an interval $I = [M, M + q)$ write
\[
A_I = \{n \in I : (n, q) = 1,\; \lambda(n) = -1\},
\qquad
B_I = \{n \in I : (n, q) = 1,\; \lambda(n) = +1\}.
\]
Notice that $|A_I| + |B_I| = \varphi(q)$.

\medskip
\noindent\textbf{Step 1.}
\textit{For every interval $I = [M, M+q) \subset [1, q^{1+\eta})$
we have $|A_I| = |B_I| = \varphi(q)/2$.}

\smallskip
Suppose $|B_I| > \varphi(q)/2$ for some
$I \subset [1, q^{1 + \eta})$. Since $q$ is sufficiently large
with respect to $1/\varepsilon$ and $\eta = \varepsilon / 3$,
we can choose a prime $p \leq q^{\eta}$ coprime to $q$.
Since multiplication by $p$ permutes
$(\mathbb{Z}/q\mathbb{Z})^{\times}$,
the set $pB_I := \{pb \bmod q : b \in B_I\}$
satisfies $|pB_I| = |B_I| > \varphi(q)/2$.

By Lemma~\ref{lem:comb} applied to $(B_I, B_I)$:
for any $(c, q) = 1$ there exist
$b_1, b_2 \in B_I$ with $b_1 b_2 \equiv c \pmod{q}$,
giving $\lambda(b_1 b_2) = +1$ and
$b_1 b_2 \leq q^{2 + 2\eta}$.

By Lemma~\ref{lem:comb} applied to $(B_I, pB_I)$:
there exist $b, b' \in B_I$ with
$pbb' \equiv c \pmod{q}$, giving
$\lambda(pbb') = (-1)(+1)(+1) = -1$ and
$pbb' \leq q^{2 + 3\eta} = q^{2 + \varepsilon} \leq N$.

Taking $c = a$, we obtain integers of both signs in the
class $a$ up to $N$, contradicting the hypothesis.
The case $|A_I| > \varphi(q)/2$ is identical
(swap the roles of $A$ and $B$).
Since $|A_I| \leq \varphi(q)/2$ and $|B_I| \leq \varphi(q)/2$
while $|A_I| + |B_I| = \varphi(q)$,
we conclude $|A_I| = |B_I| = \varphi(q)/2$.

\medskip
\noindent\textbf{Step 2.}
\textit{For every interval $I = [M, M+q) \subset [1, N/q]$
we have $|A_I| = |B_I| = \varphi(q)/2$.}

\smallskip
Suppose $|B_I| > \varphi(q)/2$ for some
$I \subset [1, N/q]$.
Let $J = [1, q)$.
Since $J \subset [1, q^{1 + \eta})$,
Step~1 gives $|A_J| = |B_J| = \varphi(q)/2$.

By Lemma~\ref{lem:comb} applied to $(B_I, A_J)$
(with $|B_I| > \varphi(q)/2$ and $|A_J| = \varphi(q)/2$):
there exist $b \in B_I$, $d \in A_J$
with $bd \equiv a \pmod{q}$.
Then $\lambda(bd) = (+1)(-1) = -1$ and
$bd \leq (N/q) \cdot q = N$.

By Lemma~\ref{lem:comb} applied to $(B_I, B_J)$:
there exist $b_1 \in B_I$, $b_2 \in B_J$ with
$b_1 b_2 \equiv a \pmod{q}$.
Then $\lambda(b_1 b_2) = (+1)(+1) = +1$ and
$b_1 b_2 \leq N$.

Again both signs appear in class $a$ up to $N$,
contradicting the hypothesis.
The case $|A_I| > \varphi(q)/2$ is symmetric.
Since $|A_I| \leq \varphi(q)/2$ and $|B_I| \leq \varphi(q)/2$
while $|A_I| + |B_I| = \varphi(q)$,
we conclude $|A_I| = |B_I| = \varphi(q)/2$.

\medskip
\noindent\textbf{Step 3.}
\textit{For every $n \leq N/q - q$ with $(n,q) = 1$
we have $\lambda(n+q) = \lambda(n)$.}

\smallskip
Let $n \leq N/q - q$ with $(n, q) = 1$.
The intervals
$I = [n, n + q)$ and $I' = [n + 1, n + q + 1)$
both lie in $[1, N/q]$, so Step~2 gives
$|A_I| = |A_{I'}| = \varphi(q)/2$.

Passing from $I$ to $I'$ removes $n$ and adds $n + q$.
Since $q$ is prime and $(n, q) = 1$, we have
$(n + q, q) = 1$, so both $n$ and $n + q$ are counted
by $A$ or $B$.
The cardinality changes as
\[
|A_{I'}| = |A_I|
  - \mathbf{1}[\lambda(n) = -1]
  + \mathbf{1}[\lambda(n + q) = -1].
\]
Since $|A_{I'}| = |A_I|$, we obtain
$\mathbf{1}[\lambda(n) = -1] = \mathbf{1}[\lambda(n+q) = -1]$,
which gives $\lambda(n) = \lambda(n + q)$.
\end{proof}

We will also need the following lemma. 

\begin{lemma} \label{lem:dominant}
Let $\varepsilon > 0$  and let $q$ be a  
large prime. Let $L \geq q^{3/2 + 3\varepsilon}$. Suppose that 
\begin{align} \label{eq:sum_condition}
\sum_{1 \leq n < q} \lambda(n) = 0
\end{align}
and that for all $(n,q) = 1$ with $n \leq L$ we have $\lambda(n + q) = \lambda(n).$
Then there exists a $\kappa' \in \{-1,1\}$ such that 
$$
\lambda(n) = \kappa' \cdot \Big ( \frac{n}{q} \Big )
$$
for all $n \leq q$ with at most $\ll_\varepsilon q^{1 - \varepsilon / 4}$ exceptions. 

\end{lemma}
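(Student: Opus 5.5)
The plan is to turn the periodicity hypothesis into a statement about the Fourier coefficients of $\lambda$ on $(\mathbb{Z}/q\mathbb{Z})^\times$, and then to use complete multiplicativity to show that only one character can carry the mass. By hypothesis, on integers $n \le L+q$ with $(n,q)=1$ we have $\lambda(n)=f(n \bmod q)$ for some $f\colon(\mathbb{Z}/q\mathbb{Z})^\times\to\{\pm1\}$. Expand
\[
f=\sum_{\chi \bmod q} c_\chi\,\chi,\qquad c_\chi=\frac1{\varphi(q)}\sum_{b} f(b)\overline{\chi}(b).
\]
Then \eqref{eq:sum_condition} gives $c_{\chi_0}=0$, and Parseval gives $\sum_\chi|c_\chi|^2=1$.

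\emph{Step 1: long twisted sums.} For $q^{1+\varepsilon}\le N\le L$ and any $\psi$ we have
\[
\sum_n \lambda(n)\overline{\psi}(n)W(n/N)=\sum_\chi c_\chi\sum_n \chi\overline\psi(n)W(n/N).
\]
For $\chi\ne\psi$ the character $\chi\overline\psi$ is primitive ($q$ prime). By Lemma~\ref{lem:poisson} and the rapid decay of $\widehat W$, its contribution is $O_A(q^{-A})$. This gives
\[
\sum_n \lambda(n)\overline{\psi}(n)W(n/N)=c_\psi\,\tfrac{\varphi(q)}{q}N\widehat W(0)+O_A(q^{-A}).
\]

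\emph{Step 2: multiplicativity and short sums.} Take $N_1=q^{1+\varepsilon}$ and $N_2=q^{1/2+2\varepsilon}$, so that $N_1N_2\le L$. Put
\[
T(\psi):=\sum_k\lambda(k)\overline\psi(k)W(k/N_2).
\]
By complete multiplicativity,
\[
\Big(\sum_m\lambda(m)\overline\psi(m)W(m/N_1)\Big)\,T(\psi)=\sum_{m,k}\lambda(mk)\overline\psi(mk)W(m/N_1)W(k/N_2).
\]
Since $mk\le L$, the right side equals $\sum_b f(b)\overline\psi(b)\,\#\{(m,k)\text{ weighted}: mk\equiv b\}$. For each fixed $k$, the weighted count of $m\equiv bk^{-1}$ is $N_1\widehat W(0)/q+O(q^{-A})$ by Poisson, since $N_1\ge q^{1+\varepsilon}$. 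The right side is therefore $c_\psi\varphi(q)\cdot\frac{N_1\widehat W(0)}{q}\sum_{(k,q)=1}W(k/N_2)+O(q^{-A})$. Comparing with Step 1 yields
\[
c_\psi\bigl(T(\psi)-M\bigr)=O(q^{-A}),\qquad M:=\sum_{(k,q)=1}W(k/N_2)\asymp N_2 .
\]
So every $\psi$ with $|c_\psi|\ge q^{-10}$ satisfies $T(\psi)=M+O(q^{-5})$. This says $\lambda(k)\overline\psi(k)=1$ for all but a tiny weighted proportion of $k\sim N_2$. (Lemma~\ref{lem:largesieve} already bounds the number of such $\psi$ by $\ll q/N_2$, which is optional.)

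\emph{Step 3: uniqueness of the dominant character.} This is the key step and the place where the exponent $3/2$ enters. Suppose $\psi_1\ne\psi_2$ both satisfy $T(\psi_i)=M+O(q^{-5})$. Using $|\lambda\overline\psi_i - 1|^2 = 2-2\,\Re\,\lambda\overline\psi_i$, we get $\sum_k W(k/N_2)|\lambda(k)\overline{\psi_i}(k)-1|^2\ll q^{-5}$. By Cauchy--Schwarz, $\sum_k \psi_1\overline{\psi_2}(k)W(k/N_2)=M+O(N_2^{1/2}q^{-5/2})$. On the other hand, Lemma~\ref{lem:poisson} applied to the primitive character $\psi_1\overline\psi_2$ bounds this sum by $\ll q^{1/2+\varepsilon}$, using only frequencies $|m|\le q^{1+\varepsilon}/N_2$. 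Since $N_2=q^{1/2+2\varepsilon}$, this is a contradiction. Hence exactly one $\psi$ has $|c_\psi|\ge q^{-10}$, and Parseval forces $|c_\psi|^2=1+O(q^{-18})$.

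\emph{Step 4: identifying the character.} Since $f$ is real, $c_{\overline\psi}=\overline{c_\psi}$, so $\overline\psi$ is also dominant. By uniqueness $\psi=\overline\psi$ is real, and it is nonprincipal because $c_{\chi_0}=0$. Thus $\psi=\chi_q$ and $c_\psi$ is real with $|c_\psi|\approx1$; take $\kappa'=\operatorname{sgn}c_\psi$. Then
\[
\sum_b|f(b)-\kappa'\chi_q(b)|^2=\varphi(q)\sum_\chi|c_\chi-\kappa'\delta_{\chi=\chi_q}|^2
\]
is tiny. Each exception contributes $4$, so the number of $n<q$ with $\lambda(n)\ne\kappa'\chi_q(n)$ is at most $O(q^{-1})$ from this bound, hence at most $\ll q^{1-\varepsilon/4}$ (indeed $0$ once the $O(q^{-A})$ errors are made explicit). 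The term $n=q$ is at most one further exception.

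The main obstacle is making the error terms in Steps 1--2 rigorous and uniform, since we can never assert $c_\psi=0$ exactly, only that it is small. I will first try to carry out the Poisson estimates with explicit $q^{-A}$ savings. Should the congruence count in Step 2 be less clean than expected, I would fall back on the weaker conclusion $\sum_\psi|c_\psi|^2|T(\psi)-M|^2\ll q^{1-\varepsilon/2}N_2$. Combined with Step 3, this still leaves only the stated $q^{1-\varepsilon/4}$ exceptions.
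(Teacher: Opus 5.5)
Your proposal is correct, but it takes a genuinely different route from the paper.

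The paper's argument rests on the single identity $\lambda(n^2p)=-1$. Expanding in characters, $\chi(n^2p)=\chi^2(n)\chi(p)$ singles out $\chi_q$ as the only surviving character whose $n$-sum does not cancel. All other characters are handled by H\"older with exponents $(2,4,4)$, Parseval, Poisson summation for $\sum_n\chi^2(n)W(n/N)$, and fourth-moment large sieve bounds for the dual sum and the prime sum. This is an averaging argument that only looks at $\lambda$ on the thin set of integers $n^2p$. It yields only $|\widehat{\lambda}(\chi_q)|\ge\varphi(q)(1-O(q^{-\varepsilon/4}))$, hence $O(q^{1-\varepsilon/4})$ exceptions and an undetermined sign $\kappa'$. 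That is why the paper later needs a separate argument to exclude $\kappa'=-1$.

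You instead use full complete multiplicativity, $\lambda(mk)=\lambda(m)\lambda(k)$ with $m$ running over complete residue systems. This shows that any character with non-negligible Fourier mass agrees with $\lambda$ on essentially all $k\sim N_2$. Two distinct such characters are then ruled out by P\'olya--Vinogradov (via Poisson summation), since $N_2>q^{1/2+\varepsilon}$. Reality of $f$ then identifies the unique dominant character as $\chi_q$.

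Your route needs neither the large sieve nor H\"older, and its conclusion is much stronger: there are no exceptions at all among $n<q$. Since $n=1$ is then not an exception, $f(1)=1$ forces $\kappa'=1$, so the paper's case $\kappa'=-1$ is actually vacuous.

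The error terms you flag as the main obstacle also disappear entirely. For $1\le b<q$ and $k\le L/q$ coprime to $q$ you have exactly $f(bk)=\lambda(bk)=\lambda(b)\lambda(k)=f(b)f(k)$. This gives $c_\psi\bigl(1-\lambda(k)\overline{\psi}(k)\bigr)=0$ with no Poisson summation at all.

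More directly still, consider the set of $k$ with $f(xk)=f(x)f(k)$ for all $x$. It is a subgroup of $(\mathbb{Z}/q\mathbb{Z})^\times$ containing $1,\dots,\lfloor L/q\rfloor$. By the classical least-non-residue argument, a proper subgroup must omit some positive integer below $\sqrt q+1$. Hence $f$ is a nonprincipal real character, i.e.\ $f=\chi_q$ exactly.
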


\begin{proof}
Define the Fourier coefficients
\[
\widehat{\lambda}(\chi) := \sum_{\substack{x = 1 \\ (x,q)=1}}^{q}
\lambda(x)\, \overline{\chi}(x)
\]
for each Dirichlet character $\chi$ modulo $q$.
By condition \eqref{eq:sum_condition}, $\widehat{\lambda}(\chi_0) = 0$.
Since $\lambda$ is $q$-periodic on integers coprime to $q$ up to $L$
we may write
\begin{equation}\label{eq:fourier_expand}
\lambda(n) = \frac{1}{\varphi(q)} \sum_{\chi \neq \chi_0}
\widehat{\lambda}(\chi)\, \chi(n)
\end{equation}
for all $(n,q) = 1$ with $n \leq L$. Let $W$ be a smooth function, compactly supported in $[1/2,1]$, satisfying $0 \leq W \leq 1$ and 
$\int W(x)\, dx > 0$. 
Set $P = q^{1/2 - \varepsilon}$ and $N = q^{1/2 + 2\varepsilon}$.
Since $\lambda$ is completely multiplicative,
$\lambda(n^2 p) = \lambda(n)^2 \lambda(p) = -1$ for every prime~$p$.
Therefore
\begin{equation}\label{eq:key_identity}
 \sum_{\substack{n \in \mathbb{Z} \\ P/2 \leq p \leq P \\ p \text{ prime},\, (np,q)=1}} W \Big (\frac{n}{N} \Big )
\;=\;
\biggl|
\sum_{\substack{n \in \mathbb{Z} \\ P/2 \leq p \leq P \\ p \text{ prime},\, (np,q)=1}}
\lambda(n^2 p) W \Big ( \frac{n}{N} \Big )
\biggr|.
\end{equation}
Note that $N^2 P = q^{3/2 + 3\varepsilon} \leq L$,
so~\eqref{eq:fourier_expand} is valid for $n^2 p$ whenever
$N/2 \leq n \leq N$ and $P/2 \leq p \leq P$. Since $W$ is compactly supported in $[1/2,1]$ the sum over $n \in \mathbb{Z}$ in~\eqref{eq:key_identity} is constrained to $N/2 \leq n \leq N$.
Substituting~\eqref{eq:fourier_expand} and using
$\chi(n^2 p) = \chi^2(n)\chi(p)$, the right-hand side becomes
\begin{equation}\label{eq:char_expand}
\biggl|
\frac{1}{\varphi(q)} \sum_{\chi \neq \chi_0} \widehat{\lambda}(\chi)
\sum_{\substack{(n,q)=1}} \chi^2(n) W \Big ( \frac{n}{N} \Big )
\sum_{\substack{P/2 \leq p \leq P \\ (p,q)=1}} \chi(p)
\biggr|.
\end{equation}
Let $\chi_q$ denote the quadratic character modulo $q$.
Since $\chi_q^2 = \chi_0$, the character $\chi_q$ contributes
\[
\leq \frac{|\widehat{\lambda}(\chi_q)|}{\varphi(q)}
\sum_{\substack{(n,q)=1}} W \Big ( \frac{n}{N} \Big )
\cdot
\biggl| \sum_{\substack{P/2 \leq p \leq P \\ (p,q)=1}} \chi_q(p) \biggr|
\;\leq\;
\frac{|\widehat{\lambda}(\chi_q)|}{\varphi(q)}
\sum_{\substack{n \in \mathbb{Z} \\ P/2 \leq p \leq P \\ (np,q)=1}} W \Big ( \frac{n}{N} \Big ).
\]
We apply H\"older's inequality with exponents $(2,4,4)$ to the sum
over $\chi \neq \chi_0, \chi_q$ in \eqref{eq:char_expand}. Thus it remains to
bound 
$$
\Big ( \frac{1}{\varphi(q)} \sum_{\chi} |\widehat{\lambda}(\chi)|^2 \Big )^{1/2} \cdot 
\Big ( \frac{1}{\varphi(q)} \sum_{\chi \neq \chi_0, \chi_q} \Big | \sum_{n} \chi^2(n) W \Big ( \frac{n}{N} \Big ) \Big |^4 \Big )^{1/4} \cdot \Big ( \frac{1}{\varphi(q)} \sum_{\chi} \Big | \sum_{P/2 \leq p \leq P} \chi(p) \Big |^4 \Big )^{1/4}. 
$$
We now estimate each of these terms. 
By Parseval's identity, 
$$
\frac{1}{\varphi(q)} \sum_{\chi} |\widehat{\lambda}(\chi)|^2
= \varphi(q) \leq q.
$$
For $\chi \neq \chi_0, \chi_q$, the character $\chi^2$ is non-principal
(since $\chi_q$ is the only non-trivial character of order~$2$
when $q$ is prime) and primitive (since $q$ is prime).
Therefore by Poisson summation (Lemma~\ref{lem:poisson}),
\begin{align*}
\sum_{\substack{(n,q)=1}} \chi^2(n) W \Big ( \frac{n}{N} \Big )
& =
\frac{N \tau(\chi^2)}{q}
\sum_{m} \overline{\chi^2}(m)\,
\widehat{W}\!\Bigl(\frac{mN}{q}\Bigr) \\ & = \frac{N \tau(\chi^2)}{q} \sum_{|m| \leq q^{1 + \sigma} / N} \overline{\chi^2}(m) \widehat{W} \Big ( \frac{m N}{q} \Big ) + O_{\sigma, A} (q^{-A})
\end{align*}
for any $A > 0$ and any $0 < \sigma < \varepsilon$, owing to the rapid decay $\widehat{W}(x) \ll_{A} (1 + |x|)^{-A}$ of $\widehat{W}$.
Therefore, 
\begin{align*}
\frac{1}{\varphi(q)} \sum_{\chi \neq \chi_0, \chi_q} \Big | \sum_{n} \chi^2(n) W \Big ( \frac{n}{N} \Big ) \Big |^4  & = \frac{N^4}{q^2 \varphi(q)} \sum_{\chi \neq \chi_0, \chi_q} \Big | \sum_{|m| \leq q^{1 + \sigma} / N} \overline{\chi^2}(m) \widehat{W} \Big ( \frac{m N}{q} \Big ) \Big |^4 + O_\sigma(1) 
\end{align*}
Since $q$ is prime, for each character $\psi$ modulo $q$ there are at most two characters $\chi$ such that $\psi = \chi^2$.   We can therefore bound the above sum by a sum over all the characters $\psi$ modulo $q$, i.e, 
$$
\leq \frac{2 N^4}{q^2 \varphi(q)} \sum_{\psi} \Big | \sum_{|m| \leq q^{1 + \sigma} / N} \psi(m) \widehat{W} \Big ( \frac{m N}{q} \Big ) \Big |^4 + O_\sigma(1). 
$$
By the large sieve, this is 
$$
\ll_\sigma \frac{N^4}{q^3} \cdot \Big ( q + \Big ( \frac{q^{1 + \sigma}}{N} \Big )^2 \Big ) \cdot \Big ( \frac{q^{1 + \sigma}}{N} \Big )^2  \cdot q^{\sigma} \ll N^2 q^{3 \sigma}
$$
since $\sigma \leq \varepsilon$.
Finally, by the large sieve, 
$$
\frac{1}{\varphi(q)} \sum_{\chi} \Big | \sum_{P/2 \leq p \leq P} \chi(p) \Big |^4 \ll \frac{1}{q} \cdot (P^2 + q) P^2 \ll P^2 . 
$$
Combining these together it follows that the contribution of all characters $\chi \neq \chi_0, \chi_q$ to \eqref{eq:char_expand} is 
$$
\ll_{\sigma} q^{1/2} \cdot \sqrt{N}  q^{3 \sigma / 4} \cdot \sqrt{P}   = \sqrt{q N P} \cdot q^{3 \sigma / 4}, 
$$ 
for any $\sigma \in (0, \varepsilon)$. 
Combining everything together, we conclude that
$$
\sum_{\substack{n \in \mathbb{Z} \\ P/2 \leq p \leq P \\ (n p , q) = 1}} W \Big ( \frac{n}{N} \Big ) \leq  \frac{|\widehat{\lambda}(\chi_q)|}{\varphi(q)}
\sum_{\substack{n \in \mathbb{Z} \\ P/2 \leq p \leq P \\ (np,q)=1}} W \Big ( \frac{n}{N} \Big ) + O_\sigma(\sqrt{q N P} q^{3 \sigma / 4})
$$
Since 
$$
\frac{N P}{\log P} \asymp \sum_{\substack{n \in \mathbb{Z} \\ P/2 \leq p \leq P \\ (np, q) = 1}} W \Big ( \frac{n}{N} \Big )
$$ 
and $N P = q^{1 + \varepsilon}$
we conclude, upon taking $\sigma =  \varepsilon / 6 < \varepsilon$, that
$$
1 \leq \frac{|\widehat{\lambda}(\chi_q)|}{\varphi(q)} + O_{\varepsilon}(q^{- \varepsilon / 4}).
$$ 
Since 
$$
\widehat{\lambda}(\chi_q) = \sum_{1 \leq n < q} \lambda(n) \chi_q(n)
$$
and $\lambda(n) \chi_q(n) \in \{-1, 1\}$ for all $1 \leq n < q$,  
it follows that
$
\lambda(n) = \kappa' \chi_q(n)
$
for all $n \leq q$ with at most $\ll_\varepsilon q^{1 - \varepsilon / 4}$ exceptions. 
\end{proof}

In the case $\kappa' = 1$ we can obtain a stronger conclusion. 

\begin{lemma}\label{lem:exceptional}
Let $\delta > 0$ and let $q$ be a prime, sufficiently large
with respect to $1/\delta$.
Let $L \geq q^{3/2}$.
Suppose that
\begin{enumerate}
\item[\textup{(i)}] $\lambda(n) = \chi_q(n)$ for all but at most
$O_\delta(q^{1-\delta})$ integers $n \leq q$ with $(n,q)=1$;
\item[\textup{(ii)}] $\lambda(n+q) = \lambda(n)$ for all
$(n,q)=1$ with $n \leq L$.
\end{enumerate}
Then for all $N \leq q$, 
\[
E(N) \;\ll_\delta\; N q^{-3\delta/4},
\]
where $E(N)$ denotes the number of primes $N / 2 \leq p \leq N$
with $\chi_q(p)=1$.
In particular,
\[
\sum_{\substack{p \leq q \\ \chi_q(p) =1}}
\frac{1}{p} \;\ll_\delta\; q^{-\delta/2}.
\]
\end{lemma}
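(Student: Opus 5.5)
The plan is to turn each prime $p\in[N/2,N]$ with $\chi_q(p)=1$ into many exceptions to the relation $\lambda=\chi_q$, and then count those exceptions using (i) and (ii). The basic observation is this. If $p$ is such a prime and $m$ is coprime to $q$ with $\lambda(m)=\chi_q(m)$, then
\[
\lambda(pm) = -\lambda(m) = -\chi_q(m) = -\chi_q(pm).
\]
So $pm$ is an exception, i.e.\ $\lambda(pm)\neq\chi_q(pm)$.

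First I will upgrade (i) to longer ranges using (ii). Both $\lambda$ (by (ii)) and $\chi_q$ are $q$-periodic on integers coprime to $q$ up to $L$. Hence the number of exceptions $n\le X$, $(n,q)=1$, with $X\le L$, is
\[
\ll_\delta (X/q+1)\,q^{1-\delta}.
\]
In particular, for $q\le X\le L$ the exceptions have density $\ll q^{-\delta}$ in $[1,X]$.

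\emph{Large $N$.} If $N\ge q^{1-\delta/4}$, there is nothing to do. Every counted prime $p\le q$ is itself an exception, since $\lambda(p)=-1\ne\chi_q(p)$. So $E(N)\ll q^{1-\delta}\le Nq^{-3\delta/4}$.

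\emph{Small $N$.} If $N\le q^{1/2}$, take $M=q$ and $m\in[M,2M]$ coprime to $q$ and non-exceptional. By the upgraded count, there are $\gg M$ such $m$, since exceptions number $\ll q^{1-\delta}$. Each product $pm$ is at most $2Nq\le 2q^{3/2}$, which is within the range of (ii) once $L\ge 2q^{3/2}$; the factor $2$ can be absorbed by shrinking $M$. Each integer $n$ has at most one factorisation $n=pm$ with $p\in[N/2,N]$ prime and $m\in[M,2M]$, up to the $O(1)$ ambiguity coming from the number of prime factors of $m$ in $[N/2,N]$; I will handle that multiplicity by a crude bound $\ll \log q$ absorbed into the saving. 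This gives
\[
E(N)\cdot M \ll \#\{\text{exceptions}\le 2NM\} \ll_\delta N M\, q^{-\delta}\log q,
\]
hence $E(N)\ll Nq^{-3\delta/4}$.

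\emph{Middle range.} The main obstacle is $q^{1/2}<N<q^{1-\delta/4}$, where $Nq$ exceeds $L$. Here I would take $M=L/(2N)$, which lies between $q^{1/2+\delta/4}$ and $q$, and again count non-exceptional $m\le M$. The upgraded bound only says that exceptions in $[1,M]$ number $\ll q^{1-\delta}$. That is enough provided $M\gg q^{1-\delta/2}$, i.e.\ provided $N\ll q^{1/2+\delta/2}$, and then the same double counting applies. For the remaining $N$, I would try iterating: write $m=p'm'$ with primes $p'$ of moderate size, so that products stay below $L$. Alternatively, I would use that for $N>q^{1/2}$ each $n\le q$ has at most one prime factor in $[N/2,N]$, so the map $(p,m)\mapsto pm$ into $[1,q]$ is injective. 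Then it suffices to show that at least $\gg q^{1-3\delta/4}/N$ of the $m\le q/N$ are non-exceptional. I expect this to come from averaging the count of exceptions over dyadic ranges and choosing $N$-dependent ranges for $m$. This counting in the middle range is the step I expect to need the most care.

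The final assertion follows by summing over dyadic $N=q/2^j$:
\[
\sum_{\substack{p\le q\\ \chi_q(p)=1}}\frac1p \ll \sum_j \frac{E(q/2^j)}{q/2^j}.
\]
For $N\ge q^{\delta/4}$ each term is $\ll q^{-3\delta/4}$, and there are $O(\log q)$ such terms. For primes $p<q^{\delta/4}$ I would use the same double counting with $M=q$, which gives that there are no such primes at all for $q$ large, or at least a saving of $q^{-3\delta/4}$ per term. Together this gives $\ll q^{-3\delta/4}\log q\ll q^{-\delta/2}$.
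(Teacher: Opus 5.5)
Your small-$N$ case ($N\le q^{1/2}$, multipliers $m\asymp q$, multiplicity $\ll\log q$) is correct. So is your large-$N$ case ($N\ge q^{1-\delta/4}$, where each counted prime is itself an exception). The gap is the middle range $q^{1/2+\delta/2}\lesssim N<q^{1-\delta/4}$: it is left open, and the strategy you sketch for it cannot close it.

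There the products $pm$ must stay below $L$ (or below $q$), which forces $m\le L/N$ (or $m\le q/N$). For $L=q^{3/2}$, once $N$ exceeds about $q^{1/2+\delta}$ this forces $m$ below $q^{1-\delta}$. The only information you use about exceptions is the count coming from (i) and (ii). Below $q$ that count is an absolute bound $O(q^{1-\delta})$, not a density. As far as (i) and (ii) are concerned, every $m\le c\,q^{1-\delta}$ could be exceptional, so no averaging over dyadic ranges of $m$ can supply the non-exceptional multipliers you need; some multiplicative input is required. (Also, the threshold in your alternative should read $\gg q^{1-\delta/4}/N$, not $q^{1-3\delta/4}/N$.)

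The missing idea is to bootstrap from the range you have already settled, using primes as multipliers.

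A prime $r$ with $\chi_q(r)=-1$ is automatically non-exceptional, since $\lambda(r)=-1=\chi_q(r)$. Your small-$N$ argument only needs $Nq\le L$, so it shows that for $R\le q^{1/2}$ all but $O(Rq^{-3\delta/4})$ primes in $(R/2,R]$ have $\chi_q(r)=-1$.

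For $N$ in the middle range take, say, $R=q/N\in(q^{\delta/4},q^{1/2})$. Consider the products $pr$ with $p\in(N/2,N]$, $\chi_q(p)=1$, and $r\in(R/2,R]$, $\chi_q(r)=-1$. They are pairwise distinct, so there is no multiplicity issue; they are all exceptions; and they are all $\le q$. Hence $E(N)\cdot R/\log q\ll q^{1-\delta}$, i.e.\ $E(N)\ll Nq^{-\delta}\log q$.

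This is exactly the paper's argument. It first takes $R=q$ to get $E(N)\ll Nq^{-\delta}\log q$ for $N\le L/q$. It then feeds this back with $R=L/q$ to cover $q^2/L\le N\le q$, and the two ranges meet because $L\ge q^{3/2}$. Your hint about iterating with $m=p'm'$ points in this direction, but the point is to take the multiplier itself to be such a prime. With this in place, your dyadic summation for $\sum 1/p$ goes through.
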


\begin{proof}
%
Suppose that $q \leq L' \leq L$.
An integer $n \leq L$ with $(n,q) = 1$ has
$\lambda(n) \neq \chi_q(n)$ if and only if its residue
class modulo $q$ is one of the $O(q^{1-\delta})$
exceptional classes from~(i)
(by the periodicity of both $\lambda$ and $\chi_q$ stemming from (ii)),
and each class contributes at most $L'/q + 1 \ll L' / q$
representatives up to $L'$. Thus for any $L'$ with $q \leq L' \leq L$, 
\begin{align} \label{eq:easy_bound} 
\# \{ n \leq L' : \lambda(n) \neq \chi_q(n) \} \ll L' q^{-\delta}.
\end{align}
Define
\begin{align*}
Q^+_N &= \{ N/2<p\le N : p\text{ prime},  \chi_q(p)=1 \}, \\
Q^-_R &= \{ R/2<r\le R : r\text{ prime}, \chi_q(r)=-1 \}.
\end{align*}
Then $E(N) = |Q^+_N|$.
For each $p\in Q^+_N$ and $r\in Q^-_R$, $1=\lambda(pr)\ne \chi_q(pr)=-1$,
so that $pr$ is counted on the left side of \eqref{eq:easy_bound} with $L'=RN$.
As the products $pr$ are distinct,
it follows from \eqref{eq:easy_bound} that
\begin{equation}\label{ENQR}
E(N) |Q^-_R| \ll_\delta RN q^{-\delta} \qquad (q\le NR \le L).
\end{equation}
Firstly, take $R=q$. By (i), $Q^-_q$ contains all but $O_\delta(q^{1-\delta})$ primes in $(q/2,q]$, so $|Q^-_q| \gg \pi(q)\gg q/\log q$ and we conclude that 
\begin{equation}\label{EN1}
E(N)\ll_\delta N q^{-\delta}\log q \ll_\delta N q^{-3\delta/4} \qquad (1\le N\le L/q).
\end{equation}
Next, take $R=L/q$.  By \eqref{EN1}, $Q_R^{-}$ contains all but $O_\delta(R q^{-3\delta/4})$
primes in $(R/2,R]$, and hence $|Q_R^{-}| \gg \pi(R) \gg R/\log R \gg R/\log q$.
Hence, by \eqref{ENQR},
\begin{equation}\label{EN2}
E(N)\ll_\delta N q^{-\delta}\log q \ll_\delta N q^{-3\delta/4} \qquad (q^2/L \le N\le q).
\end{equation}
as $L\ge q^{3/2}$, the ranges in \eqref{EN1} and \eqref{EN2} include all $N\in [1,q]$.
 Finally, let 
$$
E^{\star}(N) := \#\{ p \leq N : \chi_q(p) = 1  \} \leq \sum_{2^k \leq 2 N} E(2^k) \ll_\delta N q^{-3 \delta/ 4}.
$$
Partial summation gives
\[
\sum_{\substack{p \leq q \\ \chi_q(p) = 1}}
\frac{1}{p}
= \frac{E^{\star}(q)}{q}
+ \int_1^q \frac{E^{\star}(t)}{t^2}\, dt \ll_\delta
\frac{1}{q^{3\delta/4}}+\frac{\log q}{q^{3\delta/4}} \ll_\delta q^{-\delta/2}.\qedhere
\]
\end{proof}

We are finally ready to prove the theorem. 

\begin{proof}[Proof of Theorem~\ref{thm:main}]
Let $\varepsilon > 0$ and let $q$ be a prime, sufficiently large
with respect to $1/\varepsilon$.
Since $\varepsilon > 0$ is arbitrary it suffices to prove the theorem
with $q^{5/2+4\varepsilon}$ in place of $q^{5/2+\varepsilon}$.
Moreover we can also assume without loss of generality that $\varepsilon \in (0,1)$. 

Suppose for contradiction that there exists a residue class $a$
coprime to $q$ and a sign $\kappa \in \{-1,+1\}$ such that
$\lambda(n) = \kappa$ for every $n \equiv a \pmod{q}$ with
$n \leq q^{5/2+4\varepsilon}$.
By Lemma~\ref{lem:periodicity} (with $N = q^{5/2+4\varepsilon}$),
\begin{equation}\label{eq:periodicity}
\lambda(n+q) = \lambda(n)
\quad \text{for all } (n,q) = 1 \text{ with }
n \leq q^{3/2+3\varepsilon}.
\end{equation}
By Lemma~\ref{lem:dominant} (with $L = q^{3/2+3\varepsilon}$),
there exists $\kappa' \in \{-1,+1\}$ such that
\begin{equation}\label{eq:correlation}
\lambda(n) = \kappa'\, \chi_q(n)
\quad \text{for all but at most } O_\varepsilon(q^{1-\varepsilon/4})
\text{ integers } n \leq q \text{ with } (n,q)=1,
\end{equation}
where $\chi_q$ is the Legendre symbol with modulus $q$.
The remainder of the proof shows that neither sign of $\kappa'$
is compatible with~\eqref{eq:correlation}.

\medskip
\noindent\textbf{Case $\kappa' = -1$.}
In this case $\lambda(n) = -\chi_q(n)$ for almost all
$n \leq q$ with $(n,q)=1$.
\medskip

We observe that for any two primes $p_1, p_2$ with
$\chi_q(p_1) = \chi_q(p_2)$, the product $p_1 p_2$
satisfies $1=\lambda(p_1 p_2) \neq -\chi_q(p_1 p_2)$.
Let $F$ denote the number of primes $p \leq q^{1/2}$
with $\chi_q(p) = +1$ and let $G$ denote the number
of primes $p \leq q^{1/2}$ with $\chi_q(p) = -1$.
Since $p \neq q$ for all such primes, $\chi_q(p) \in \{-1,+1\}$,
so $F + G = \pi(q^{1/2}) \gg q^{1/2}/\log q$.
The products $p_1 p_2$ with $p_1 < p_2 \leq q^{1/2}$
and $\chi_q(p_1) = \chi_q(p_2) = +1$ are distinct
(by unique factorization), satisfy $p_1 p_2 \leq q$,
and all have $\lambda(p_1 p_2) \neq -\chi_q(p_1 p_2)$
by the observation above.
By~\eqref{eq:correlation}, the number
of integers $n \leq q$ with $(n,q) = 1$ and
$\lambda(n) \neq -\chi_q(n)$ is $O(q^{1-\varepsilon/4})$,
so $\binom{F}{2} \ll q^{1-\varepsilon/4}$.
The same argument with $\chi_q(p_1) = \chi_q(p_2) = -1$
gives $\binom{G}{2} \ll q^{1-\varepsilon/4}$.
Hence $F, G \ll q^{1/2-\varepsilon/8}$, and therefore
$\pi(q^{1/2}) = F + G \ll q^{1/2-\varepsilon/8}$,
contradicting $\pi(q^{1/2}) \gg q^{1/2}/\log q$.

\medskip
\noindent\textbf{Case $\kappa' = +1$.}
In this case $\lambda(n) = \chi_q(n)$ for almost all
$n \leq q$ with $(n,q)=1$.
\medskip

Since $\lambda(p) = -1$ the condition $\lambda(p) = \chi_q(p)$ forces
$\chi_q(p) = -1$ for
almost all primes $p \leq q$.
To exploit this, consider the multiplicative function
$g(n) = \sum_{d \mid n} \chi_q(d) = (1 \ast \chi_q)(n)$.
Since $g(p^k) = 1 + \chi_q(p) + \cdots + \chi_q(p)^k$,
we have $g(p^k) = 0$ whenever $\chi_q(p) = -1$ and $k$ is
odd.
Hence $g(n) = 0$ for every non-square $n$ whose prime
factors all satisfy $\chi_q(p) = -1$. In addition $g(n) \geq 0$ for all $n$.

Every $n < q$ with $g(n) \neq 0$ is therefore either
a perfect square or has a prime factor in
$Q := \{p \leq q : p \text{ prime},\; \chi_q(p) = 1\}$.
The squares contribute
$\sum_{m^2 \leq q} |g(m^2)|
\leq \sum_{m \leq \sqrt{q}} \tau(m^2)
\ll \sqrt{q}\,(\log q)^{O(1)}$.
For the remaining terms, since
$|g(n)| \leq \tau(n)$ and $\tau(pm) \leq 2\tau(m)$,
\[
\sum_{\substack{n < q \\
\exists\, p \in Q:\, p \mid n}} |g(n)|
\;\leq\;
\sum_{p \in Q} \sum_{m < q/p} \tau(pm)
\;\ll\;
q \log q \sum_{p \in Q} \frac{1}{p}
\;\ll_\varepsilon\; q^{1-\varepsilon/9},
\]
where the final bound follows from
Lemma~\ref{lem:exceptional}
(applied with $\delta = \varepsilon/4$,
whose hypotheses are
verified by~\eqref{eq:correlation}
and~\eqref{eq:periodicity}),
which gives $\sum_{p \in Q} 1/p \ll_\varepsilon q^{-\varepsilon/8}$,
and the fact that $q \log q \cdot q^{-\varepsilon/8} \ll q^{1-\varepsilon/9}$
for $q$ sufficiently large.
Hence, for any smooth function $0 \leq W \leq 1$, compactly supported in $[1/2 ,1]$ and with $\widehat{W}(0) > 0$, 
\begin{equation}\label{eq:g_upper}
\sum_{n} g(n) W \Big ( \frac{n}{q} \Big ) \leq \sum_{n < q} g(n) \;\ll_\varepsilon\; q^{1-\varepsilon/9}.
\end{equation}
On the other hand, by Lemma \ref{le:hyperbola},
\[
\sum_{n} g(n) W \Big ( \frac{n}{q} \Big )
= \widehat{W}(0) q\, L(1,\chi_q) + O(\sqrt{q}).
\]
By Siegel's theorem (Lemma~\ref{lem:siegel}),
$L(1,\chi_q) \gg_\varepsilon q^{-\varepsilon/10}$, so
\begin{equation}\label{eq:g_lower}
\sum_{n} g(n) W \Big ( \frac{n}{q} \Big )\;\gg_\varepsilon\; q^{1-\varepsilon/10}.
\end{equation}
The bounds~\eqref{eq:g_upper} and~\eqref{eq:g_lower} are
contradictory for $q$ large enough in terms of
$1/\varepsilon$, since $\varepsilon / 9 > \varepsilon / 10$.

\medskip
Since both cases lead to contradictions, the initial
assumption is false, and Theorem~\ref{thm:main} follows.
\end{proof}

\end{document}